\documentclass[10pt]{article}

\usepackage[margin=2.5cm]{geometry}
\usepackage{lmodern}
\usepackage[T1]{fontenc}
\usepackage[utf8]{inputenc}
\usepackage{microtype}

\usepackage{titlesec}
\usepackage{indentfirst}

\titleformat{\section}
  {\large\bfseries}
  {\thesection}{0.5em}{}

\titleformat{\subsection}
  {\normalsize\bfseries}
  {\thesubsection}{0.5em}{}

\usepackage{etoolbox}

\renewenvironment{abstract}
{
\par\vspace{0.5em}
\noindent\textbf{Abstract --}
\bfseries
}
{\par\vspace{1em}}

\usepackage{cite}
\usepackage{amsmath,amssymb,amsfonts,amsthm,bm,mathtools}
\usepackage{algorithm}
\usepackage{algpseudocode}
\usepackage{graphicx}
\usepackage{textcomp}
\usepackage{xcolor}
\usepackage{subcaption}
\usepackage{url}
\usepackage{balance}
\usepackage{enumitem}
\usepackage{tikz}
\usetikzlibrary{spy}
\usetikzlibrary{calc}

\newtheorem{theorem}{Theorem}
\newtheorem{lemma}[theorem]{Lemma} 
\newtheorem{proposition}[theorem]{Proposition}
\newtheorem{definition}[theorem]{Definition}
\newtheorem{assumption}[theorem]{Assumption}

\newtheorem{remark}[theorem]{Remark}

\newcommand{\N}{\mathbb{N}}
\newcommand{\R}{\mathbb{R}}

\newcommand{\blue}{\color{black}}
\newcommand{\A}{\mathcal{A}}

\newcommand{\KL}{\textnormal{KL}}

\newcommand*{\inte}{\operatorname{int}}
\newcommand*{\dom}{\operatorname{dom}}

\DeclareMathOperator*{\argmin}{arg\,min}

\mathtoolsset{showonlyrefs}

\newcommand{\syst}[1]{%
\left\{\begin{array}{l}
#1
\end{array}\right.
\kern-\nulldelimiterspace}

\newcommand{\sego}[1]{\textcolor{black}{#1}}

\newcommand{\modif}[1]{{\color{black}#1}}

\makeatletter
\renewcommand{\maketitle}{
\begin{center}
    {\LARGE \bfseries \@title \par}
    \vspace{0.8em}
    {\large \@author \par}
    \vspace{1em}
    \rule{\textwidth}{0.5pt}
\end{center}
\vspace{1.5em}
}
\makeatother

\title{Reinterpreting EMML as Mirror Descent for Constrained Maximum Likelihood Estimation}

\author{
\begin{center}
{\large Antonin Clerc$^{1,2}$, Ségolène Martin$^{1}$, Nicolas Papadakis$^{2}$, Gabriele Steidl$^{1}$} \\[0.5em]
{\small
$^1$ Institute of Mathematics, Technische Universität Berlin, Germany \\ 
$^2$ Univ. Bordeaux, CNRS, Bordeaux INP, IMB, UMR 5251, F-33400 Talence, France \\[0.5em]
\texttt{antonin.clerc@math.u-bordeaux.fr, nicolas.papadakis@math.u-bordeaux.fr} \\[-0.5em]
\texttt{martin@math.tu-berlin.de, steidl@math.tu-berlin.de}
}
\end{center}
}

\begin{document}

\maketitle

\begin{center}
\begin{minipage}{0.80\textwidth}
\begin{abstract}
The Expectation--Maximization Maximum Likelihood (EMML) algorithm belongs to the Expectation--Maximization family and is widely used for image reconstruction problems under Poisson noise.
In this paper, we reinterpret EMML as a mirror descent method applied to a reparametrized objective function. 
This perspective allows us to incorporate convex constraints into the algorithm through appropriately chosen Bregman projections, while preserving the multiplicative structure of the EMML updates to ensure computational efficiency. 
We then establish the convergence of the resulting algorithm toward a solution of the constrained maximum-likelihood problem. 
Numerical experiments on hyperspectral unmixing problems demonstrate that the constrained EMML converges in fewer iterations than the classical EMML.
\end{abstract}
\vspace{2em}
\end{minipage}
\end{center}

\section{Introduction}
Image reconstruction problems under Poisson noise naturally arise in scientific imaging domains, where measurements are photon-limited. Prominent examples include fluorescence microscopy~\cite{sarder_deconvolution_2006, bertero_image_2009}, astronomical imaging~\cite{murtagh2006astronomical, dupe_proximal_2009}, hyperspectral image restoration~\cite{zou_restoration_2018}, and Positron Emission Tomography~\cite{shepp1982maximum}. In these applications, photon counts are inherently low and follow discrete statistics.

Mathematically, the goal is to recover an unknown non-negative image $x \in \R^n_{> 0}$ from Poisson-degraded photon count measurements $y \in \R^m_{\geq0}$, modeled as
\begin{equation}\label{eq:poisson_model}
y \sim \mathrm{Poisson}(A x),
\end{equation}
$A \in \R^{m \times n}_{\ge 0}$ is a known degradation operator, typically ill-conditioned. 
Under Poisson noise, the negative log-likelihood of the observation $y$ given an image $x$ is $F(x)=\mathrm{KL}(y, Ax)$ where $\mathrm{KL}$ is the generalized Kullback–Leibler divergence defined for all $u \in \mathbb{R}^m_{\ge 0}$ and $v \in \mathbb{R}^m_{> 0}$ 
\begin{equation}
\mathrm{KL}(u, v) = \sum_{i=1}^m \left[ u_i \log \frac{u_i}{v_i} + v_i - u_i \right],
\end{equation}
with the convention $0 \log 0 = 0$. To ensure that forall $i\in \{1, \dots, m\}$, $(A x)_i > 0$, we assume that each row $a_i$ of $A$ satisfies $a_i \in \R^n_{\ge 0} \setminus \{\mathbf{0}\}$, thereby ruling out the degenerate null Poisson distribution.
This condition is generally satisfied by standard operators such as convolutions with smoothing filters.

Minimizing the Poisson log-likelihood is numerically challenging due to the lack of Lipschitz continuity of the gradient and the singular behavior near the boundary of $\R^n_{> 0}$, which prevents the direct use of standard gradient descent methods. Early iterative reconstruction methods minimizing the KL divergence were introduced in several works~\cite{lucy_iterative_1974, richardson_bayesian-based_1972, shepp1982maximum, byrne1993iterative} based on the Expectation--Maximization (EM) framework. 
In particular, the Richardson--Lucy algorithm~\cite{lucy_iterative_1974, richardson_bayesian-based_1972} was originally proposed for linear image deconvolution under Poisson noise. Later, Byrne~\cite{byrne1993iterative} and Shepp and Vardi~\cite{shepp1982maximum} independently derived the same multiplicative updates within the EM formalism, introducing the Expectation–Maximization Maximum Likelihood (EMML) and Maximum Likelihood Expectation–Maximization (MLEM) algorithms, respectively. Although introduced in different communities under different names, these methods correspond to the same EM iterations for Poisson maximum-likelihood estimation. In the remainder of this work, we  refer to all these algorithms as EMML.

The EMML iterations to minimize $F$ are given by
\begin{equation}\label{eq:EMML_update}
x^{(k+1)} = x^{(k)} \odot \frac{A^\top \frac{y}{A x^{(k)}}}{A^\top \mathbf{1}},
\end{equation}
where multiplications and divisions are component-wise, and $\odot$ denotes the Hadamard product. These iterations are simple, preserve non-negativity and monotonically increase the likelihood \cite{byrne1993iterative}. EMML typically provides high-quality reconstruction after only a few iterations, but the quality may degrade afterward. 
Nevertheless, many works continue to rely on EMML ~\cite{zunino_reconstructing_2023,zhang2025probing}.
To improve reconstruction quality and mitigate the need for early stopping, recent approaches extend the EMML iterations by incorporating regularization, see, e.g.~\cite{SST2010,Kasai2025, Deidda2019HKEM, marquis2023pet,modrzyk2024fast, modrzyk2026convergentplugandplaymajorizationminimizationalgorithm}, or use them to initialize a neural network~\cite{daniele2025deepequilibriummodelspoisson}. 

The focus of this paper is on incorporating constraints into the EMML algorithm while preserving its fast multiplicative updates. Many imaging problems naturally involve the minimization of a convex objective under convex constraints \cite{afonso2010augmented, ha_estimating_2019}. Yet, standard EMML iterations do not directly accommodate these constraints, motivating the developments proposed in this work.
Existing constrained variants of EMML, such as \cite{constrainedEMML}, only account for averages over selected pixels, corresponding to constraints of the form $\sum_{i \in \mathcal{I}} x_i = y$, and do not address general convex constraints. 
More broadly, the EM framework is not naturally tailored to constrained convex optimization problems. By contrast, first-order optimization methods provide a systematic framework for handling general convex constraints. This limitation motivates the search for a more flexible optimization framework, which we obtain by revisiting EMML from a first-order optimization perspective.

\paragraph*{Contributions} 
Building on the connection between EM algorithms for the exponential family and mirror descent (MD), we reinterpret EMML as a particular instance of MD. This perspective naturally accommodates general convex constraints while preserving both convergence guarantees and the characteristic multiplicative structure of the updates.

Section~\ref{sec:MD} recalls MD results. Section~\ref{sec:EMML_as_a_MD} shows that EMML can be interpreted as MD on a reparametrized objective. Section~\ref{sec:Constrained_EMML} extends EMML to constrained problems using Bregman projections, preserving convergence guarantees. Section~\ref{sec:numerical_results} presents numerical experiments on hyperspectral image restoration under Poisson noise, demonstrating improved PSNR and faster convergence with the constrained EMML. Conclusions are drawn in Section~\ref{sec:Conclusion}.

\section{Background on Mirror Descent} \label{sec:MD}

In this section, we revisit several convergence results for Mirror Descent (MD). Our analysis is closely related to that of \cite{bolte2018first}, but requires a slight extension of their framework to accommodate a limiting stepsize regime. More precisely, we relax a strict stepsize condition appearing in the standard theory to its boundary case. For completeness, we therefore restate the main assumptions and derive the convergence results that will be needed in the remainder of the paper.

Let $F: \R^n \to \R \cup \{+\infty\}$ \modif{such that $F(x)=f(x)+g(x)$}. Let $u: \R^n \to \R \cup \{+\infty\}$ be a strictly convex and differentiable function on $\inte(\dom u)$ where $\dom u = \{x \in \R^n : u(x) < +\infty\}$. The mapping $\nabla u$ is referred to as a \textit{mirror map}. 
For all $(x,y) \in \dom u \times \inte(\dom u)$, the Bregman divergence associated with the potential $u$ is defined by
\begin{equation*}
D_u(x,y) = u(x) - u(y) - \langle \nabla u(y), x-y \rangle.
\end{equation*}
Given a step size $\tau>0$, and $g$ differentiable on $\inte \dom u$, we define the operator
\begin{equation} \label{eq:T_tau_x}
T_\tau(x)
=
\argmin_{z \in \R^n}
\left\{
\modif{f(z)}
+
\langle z-x,\nabla g(x)\rangle
+
\frac{1}{\tau}D_u(z,x)
\right\}.
\end{equation}
The Bregman proximal gradient (BPG) iterates $\{x^k\}_{k\in\N}$ are then defined by 
\begin{equation}\label{eq:MD_Bregman_prox}
x^{(k+1)} \in T_\tau(x^{(k)}),
\end{equation}
see \cite{Teboulle2018}.
MD applied to the minimization of $g$ is recovered as a special case of the BPG when $f\equiv0$. The first-order optimality conditions for \eqref{eq:MD_Bregman_prox} then yield the classical MD update
\begin{equation}\label{eq:MD_update}
x^{(k+1)}
=
\nabla u^*
\left(
\nabla u(x^{(k)})
-
\tau \nabla g(x^{(k)})
\right),
 \end{equation}
where $u^*$ denotes the convex conjugate of $u$ \cite{rockafellar_convex_1997}. To establish convergence of the iterates when $F$ is neither convex nor Lipschitz continuous, we impose the following assumptions:
\begin{assumption}\label{assumption:MD_nonLipschitz_1}~
\begin{enumerate}[leftmargin=*, itemsep=0.5em]

\item $u$ is of Legendre type. 

\modif{
\item $g$ is proper, lower semicontinuous, with $\dom u \subseteq \dom g$, and continuously differentiable on $\inte(\dom u)$. Moreover, $g$ is relatively smooth with respect to $u$, i.e., there exists $L>0$ such that $Lu - g$ is convex on $\inte(\dom u)$.

\item $f$ is proper, lower semicontinuous, and convex, with $\dom u \cap \dom f \neq \emptyset$.
}

\item
$
v(\mathcal P) := \inf\{F(x): x \in \overline{\dom}\, u\}
$
satisfies
$
v(\mathcal P) > -\infty.
$ 
\item The operator \eqref{eq:T_tau_x} is nonempty, single-valued, and maps $\inte(\dom u)$ into $\inte(\dom u)$.
\end{enumerate}
\end{assumption}

Assumption~\ref{assumption:MD_nonLipschitz_1}
 is satisfied for 
$g(x) = \mathrm{KL}(y, Ax),~f(x) = 0$ with Burg's entropy~\cite{daniele2025deepequilibriummodelspoisson, bauschke2017descent} or Shannon entropy~\cite{kunstner2021homeomorphic} as the mirror map, provided that $A^\top y > 0$ to verify point~\textit{5)}. We next recall convergence results for MD following \cite[Remark 4.1, Proposition~4.1]{bolte2018first}.

\modif{
\begin{proposition}
\label{prop:cvMD_iter}
Suppose that Assumption \ref{assumption:MD_nonLipschitz_1} is fulfilled.
Let $(x^{(k)})_{k\in\mathbb{N}}$ be the sequence generated by \eqref{eq:MD_Bregman_prox} with stepsize  $\tau = \frac{1}{L}$. 
Then the sequence $\bigl(F(x^{(k)})\bigr)_{k\in\mathbb{N}}$ is non-increasing. 
and we have for all $N \in \N$ that
\begin{equation}
\min_{1\le k\le N} D_u(x^{(k)}, x^{(k+1)})
\le
\frac{\tau}{N}\bigl(F(x^{(0)}) - \inf F\bigr).
\end{equation}
\end{proposition}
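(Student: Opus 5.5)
The plan is to prove a \emph{sufficient decrease} inequality of the form
\begin{equation}
F(x^{(k+1)}) \le F(x^{(k)}) - \frac{1}{\tau} D_u(x^{(k)}, x^{(k+1)}), \qquad \tau = \frac{1}{L},
\end{equation}
and then telescope it. It combines two ingredients: a descent lemma coming from relative smoothness, and a Bregman three-point inequality coming from the optimality of $x^{(k+1)}$ in \eqref{eq:T_tau_x}. Throughout, Assumption~\ref{assumption:MD_nonLipschitz_1}(5) guarantees that every iterate lies in $\inte(\dom u)$, provided $x^{(0)}\in\inte(\dom u)$. Hence all Bregman divergences $D_u(\cdot, x^{(k)})$ and gradients $\nabla g(x^{(k)})$ are well defined.

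\textbf{Step 1 (descent lemma).} Since $Lu-g$ is convex on $\inte(\dom u)$, its gradient inequality gives
\begin{equation}
g(z) \le g(x) + \langle \nabla g(x), z-x\rangle + L\, D_u(z,x)
\end{equation}
for $x,z \in \inte(\dom u)$. I apply this with $x = x^{(k)}$ and $z = x^{(k+1)}$.

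\textbf{Step 2 (three-point inequality).} Set $\varphi(z) = f(z) + \langle z - x^{(k)}, \nabla g(x^{(k)})\rangle + \frac{1}{\tau}D_u(z,x^{(k)})$. Since $x^{(k+1)}\in\inte(\dom u)$ minimizes $\varphi$ and $u$ is differentiable there, the optimality condition gives some $\xi \in \partial f(x^{(k+1)})$ with
\begin{equation}
\xi + \nabla g(x^{(k)}) + \frac{1}{\tau}\bigl(\nabla u(x^{(k+1)}) - \nabla u(x^{(k)})\bigr) = 0.
\end{equation}
Combining the subgradient inequality for $f$ at $z = x^{(k)}$ with the three-point identity
\begin{equation}
D_u(z,x) - D_u(z,x^+) - D_u(x^+,x) = \langle \nabla u(x^+) - \nabla u(x), z - x^+\rangle
\end{equation}
yields
\begin{equation}
f(x^{(k)}) \ge f(x^{(k+1)}) + \langle x^{(k+1)}-x^{(k)}, \nabla g(x^{(k)})\rangle + \tfrac{1}{\tau} D_u(x^{(k+1)},x^{(k)}) + \tfrac{1}{\tau} D_u(x^{(k)},x^{(k+1)}).
\end{equation}

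\textbf{Step 3 (combine).} Adding this to Step 1 gives
\begin{equation}
F(x^{(k+1)}) \le F(x^{(k)}) + \bigl(L - \tfrac{1}{\tau}\bigr) D_u(x^{(k+1)},x^{(k)}) - \tfrac{1}{\tau} D_u(x^{(k)},x^{(k+1)}).
\end{equation}
At the boundary stepsize $\tau = 1/L$ the middle term vanishes exactly. This is precisely why the limiting regime still works: the extra term $\frac1\tau D_u(x^{(k)},x^{(k+1)})$ from the three-point inequality supplies the decrease, instead of a strict gap $L-1/\tau<0$. Nonnegativity of $D_u$ gives monotonicity of $(F(x^{(k)}))_k$.

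\textbf{Step 4 (telescope).} Summing the decrease inequality over $k=1,\dots,N$ gives
\begin{equation}
\sum_{k=1}^N D_u(x^{(k)},x^{(k+1)}) \le \tau\bigl(F(x^{(1)}) - F(x^{(N+1)})\bigr).
\end{equation}
By monotonicity $F(x^{(1)}) \le F(x^{(0)})$, and Assumption~\ref{assumption:MD_nonLipschitz_1}(4) gives $F(x^{(N+1)}) \ge v(\mathcal P) = \inf F > -\infty$. Bounding the minimum by the average then yields the claim.

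\textbf{Main obstacle.} I expect the delicate point to be Step 2. It must justify the optimality condition $0 \in \partial f(x^{(k+1)}) + \nabla g(x^{(k)}) + \frac1\tau(\nabla u(x^{(k+1)})-\nabla u(x^{(k)}))$, which requires a subdifferential sum rule. I would argue it via the qualification $x^{(k+1)} \in \inte(\dom u)\cap\dom f$, where the linear term and $\frac1\tau D_u(\cdot,x^{(k)})$ are continuous near $x^{(k+1)}$. A fallback that avoids the sum rule is the directional argument: compare $\varphi(x^{(k+1)})$ with $\varphi\bigl(x^{(k+1)}+t(x^{(k)}-x^{(k+1)})\bigr)$, use convexity of $f$ along the segment (which stays in $\inte(\dom u)$ since that set is convex), divide by $t$, and let $t\downarrow0$.
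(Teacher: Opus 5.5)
Your proposal is correct and follows essentially the same route as the paper. The paper gets the sufficient decrease $\tau\bigl(F(x^{(k+1)})-F(x^{(k)})\bigr)\le -D_u(x^{(k)},x^{(k+1)})$ by taking $s=x$ in Lemma~\ref{lemma:inequality_MD} and then telescopes; you instead derive that inequality inline from the relative-smoothness descent lemma plus the convexity-based three-point argument, which is what makes the boundary stepsize $\tau=1/L$ work. Your sum over $k=1,\dots,N$, combined with $F(x^{(1)})\le F(x^{(0)})$, in fact matches the stated range $1\le k\le N$ more faithfully than the paper's sum over $k=0,\dots,N-1$.
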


\begin{proof} ~

We first recall the following Lemma adapted from \cite[Remark~4.1]{bolte2018first}:
\begin{lemma}
\label{lemma:inequality_MD}
Suppose that Assumption~\ref{assumption:MD_nonLipschitz_1} holds. Let $\tau\in(0,1/L]$.
For $x\in \inte(\dom u)\cap\dom f$, set $x^+\coloneqq T_\tau(x)$. Then, for all 
$s\in \dom f\cap\dom u$,
\begin{equation}
\label{eq:ineq_nonconvex_lemma}
\tau\bigl(F(x^+)-F(s)\bigr)
\le
D_u(s,x)-D_u(s,x^+)-\tau D_g(s,x).
\end{equation}
In particular, taking $s=x$ gives
\begin{equation}
\label{eq:ineq_nonconvex}
\tau\bigl(F(x^+)-F(x)\bigr)
\le
-D_u(x,x^+).
\end{equation}
\end{lemma}

We can now prove the main claims of the proposition.
Let $k \ge 0$. Applying Lemma~\ref{lemma:inequality_MD} with $x=x^k,~ x^+=T_\tau(x^k)=x^{k+1}$, and using the non-negativity of the Bregman divergence, we obtain
\begin{equation}
\label{eq:ineq_nonconvex_proof}
\tau\bigl(F(x^{k+1}) - F(x^k)\bigr)
\le
- D_u(x^k,x^{k+1})
\le 0 .
\end{equation}
Hence the sequence $\bigl(F(x^k)\bigr)_{k\in\N}$ is non-increasing. Summing \eqref{eq:ineq_nonconvex_proof} from $k=0$ to $n-1$ gives
\begin{align}
\sum_{k=0}^{n-1} D_u(x^k,x^{k+1})
&\le
\tau \sum_{k=0}^{n-1} \bigl(F(x^k)-F(x^{k+1})\bigr) \\
&=
\tau \bigl(F(x^0)-F(x^n)\bigr) \\
&\le
\tau \bigl(F(x^0)-\inf F\bigr).
\label{eq:ineq_nonconvex_align}
\end{align}
Here we used that $F(x^n)\geq \inf F$ and that $\inf F>-\infty$ by Assumption~\ref{assumption:MD_nonLipschitz_1}.
From \eqref{eq:ineq_nonconvex_align}, we have
\begin{equation}
n \min_{0\le k\le n-1} D_u(x^k,x^{k+1})
\le
\tau \bigl(F(x^0)-\inf F\bigr).
\end{equation}
Dividing by $n$ yields the desired result.
\end{proof}

To establish the global convergence of the iterates, we largely follows that of \cite{bolte2018first}. However, we extend \cite[Theorem~4.1]{bolte2018first} to the limiting stepsize regime $\tau=\frac{1}{L}$ by relaxing a strict stepsize condition to a non-strict one.
We further introduce the following assumptions.
\begin{assumption}\label{assumption:MD_nonLipschitz_2}~
\begin{enumerate}[leftmargin=*, itemsep=0.5em]
\item$\dom u = \mathbb{R}^n$ and, on every bounded subset of $\mathbb{R}^n$, $u$ is $\sigma$-strongly convex.

\item $\nabla u$ and $\nabla g$ are Lipschitz continuous on every bounded subset of $\mathbb{R}^n$.
\end{enumerate}
\end{assumption}

These two assumptions differ from those in \cite[Assumption D]{bolte2018first}, but they are satisfied by the specific choice of function we adopt for EMML, which does not satisfy the $\sigma$-strong convexity of $u$ on all $\R^n$ \cite[Assumption~D]{bolte2018first}. However, we can recover similar convergence results as in \cite{bolte2018first} assuming that iterates stay bounded by example. 
We also recall the following definition from \cite[Definition~4.1]{bolte2018first} we will use to prove the convergence of the iterates.
\begin{definition}\label{def:gradient_like_seq}
\blue
Let $ F : \mathbb{R}^d \to (-\infty, \infty] $ be a proper and lower semicontinuous function. 
A sequence $ (x_k)_{k \in \mathbb{N}} $ is called a gradient-like descent sequence for $ F $ if the following three conditions hold:

\textbf{(C1) Sufficient decrease property.} There exists a constant $ \rho_1 > 0 $ such that
\begin{equation}
\rho_1 \, \|x_{k+1} - x_k\|^2 \leq F(x_k) - F(x_{k+1}), \quad \forall k \in \mathbb{N}.
\end{equation}

\textbf{(C2) Subgradient lower bound for the iterates gap.} There exists a constant $ \rho_2 > 0 $ such that
\begin{equation}
\|w_{k+1}\| \leq \rho_2 \, \|x_{k+1} - x_k\|, \quad \text{for some } w_{k+1} \in \partial F(x_{k+1}), \ \forall k \in \mathbb{N}.
\end{equation}

\textbf{(C3) Lower semicontinuity along limit points.} If $ x $ is a limit point of a subsequence $ (x_k)_{k \in K} $, with $ K \subset \mathbb{N} $, then
\begin{equation}
\limsup_{k \in K} F(x_k) \leq F(x).
\end{equation}
\end{definition}

A straightforward adaptation of \cite[Theorem~4.1]{bolte2018first} for the limiting case $\tau=\frac{1}{L}$ and the relaxed Assumption \ref{assumption:MD_nonLipschitz_2} gives the following proposition.
\begin{proposition} \label{prop:cvMD_sequence}
Suppose that Assumptions \ref{assumption:MD_nonLipschitz_1} and \ref{assumption:MD_nonLipschitz_2} are fulfilled.
Assume that the sequence $(x^{(k)})_{k\in\mathbb{N}}$ generated by \eqref{eq:MD_Bregman_prox} with stepsize $\tau = \frac{1}{L}$ is bounded.
Then the following holds true:

    i) \textbf{Subsequential convergence.}
    Any limit point of the sequence $( x^{(k)})_{k\in\mathbb{N}}$ is a critical point of $F$.
    
    ii) \textbf{Global convergence.}
    Suppose that $F$ satisfies the Kurdyka--Łojasiewicz property on $\operatorname{dom}F$. Then $( x^{(k)})_{k\in\mathbb{N}}$ has finite length and converges to a critical point $x^\ast$ of $F$.
\end{proposition}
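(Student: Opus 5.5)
The plan is to show that the bounded sequence $(x^{(k)})_{k\in\N}$ is a gradient-like descent sequence for $F$ in the sense of Definition~\ref{def:gradient_like_seq}. Then i) follows from (C1)--(C3) by the standard argument, and ii) follows from the abstract KL convergence theorem of \cite{bolte2018first} (Theorem~4.1 there, in its abstract form). Since the iterates are bounded, we fix a compact convex set $K$, say a closed ball, containing all $x^{(k)}$. Assumption~\ref{assumption:MD_nonLipschitz_2} then gives a strong convexity modulus $\sigma>0$ of $u$ on $K$ and Lipschitz constants $L_u$, $L_g$ of $\nabla u$ and $\nabla g$ on $K$. The point of boundedness is that these local constants replace the global ones in \cite[Assumption~D]{bolte2018first}.

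\textbf{(C1).} By Lemma~\ref{lemma:inequality_MD} with $\tau=1/L$ and $x=x^{(k)}$, we get $\tau(F(x^{(k+1)})-F(x^{(k)}))\le -D_u(x^{(k)},x^{(k+1)})$. The strict stepsize condition in \cite{bolte2018first} was used to extract a strictly positive multiple of a Bregman term from $D_u(s,x)-\tau D_g(s,x)$. Here we do not need that: the term $-D_u(x^{(k)},x^{(k+1)})$ in \eqref{eq:ineq_nonconvex} already survives at $\tau=1/L$. Both points lie in $K$, so $\sigma$-strong convexity gives $D_u(x^{(k)},x^{(k+1)})\ge\frac{\sigma}{2}\|x^{(k+1)}-x^{(k)}\|^2$. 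Hence (C1) holds with $\rho_1=\sigma L/2$. I regard this step as the main obstacle, because it is exactly where the boundary case $\tau=1/L$ has to be handled. If \eqref{eq:ineq_nonconvex} were stated too loosely, I would re-derive it from the three-point identity together with the convexity of $Lu-g$, which gives $D_g\le L D_u$.

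\textbf{(C2).} The optimality condition of \eqref{eq:T_tau_x} gives $0\in\partial f(x^{(k+1)})+\nabla g(x^{(k)})+\frac1\tau(\nabla u(x^{(k+1)})-\nabla u(x^{(k)}))$. Since $\dom u=\R^n$ and $g$ is $C^1$, the sum rule applies, and
\[
w_{k+1}:=\nabla g(x^{(k+1)})-\nabla g(x^{(k)})-L\bigl(\nabla u(x^{(k+1)})-\nabla u(x^{(k)})\bigr)\in\partial F(x^{(k+1)}).
\]
The local Lipschitz bounds on $K$ then give $\|w_{k+1}\|\le(L_g+LL_u)\|x^{(k+1)}-x^{(k)}\|$.

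\textbf{(C3).} Let $x^{(k)}\to\bar x$ along $k\in\mathcal K$. The minimizing property of $x^{(k)}=T_\tau(x^{(k-1)})$, compared against the candidate $z=\bar x$, gives
\[
f(x^{(k)})+\langle x^{(k)}-x^{(k-1)},\nabla g(x^{(k-1)})\rangle+LD_u(x^{(k)},x^{(k-1)})\le f(\bar x)+\langle\bar x-x^{(k-1)},\nabla g(x^{(k-1)})\rangle+LD_u(\bar x,x^{(k-1)}).
\]
From (C1) and the summability it yields, $\|x^{(k)}-x^{(k-1)}\|\to0$, so $x^{(k-1)}\to\bar x$ along $\mathcal K$ as well. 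Continuity of $u$, $\nabla u$ and $\nabla g$ then gives $\limsup f(x^{(k)})\le f(\bar x)$. Adding the continuous term $g$ gives (C3).

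\textbf{Conclusion.} For i), take any limit point $\bar x$ along $\mathcal K$. By (C1), $\|x^{(k+1)}-x^{(k)}\|\to0$, and by (C2), $w_{k}\to0$. Lower semicontinuity and (C3) give $F(x^{(k)})\to F(\bar x)$ along $\mathcal K$. Closedness of the limiting subdifferential then yields $0\in\partial F(\bar x)$. For ii), the sequence is bounded, satisfies (C1)--(C3), and $F$ has the KL property. The abstract result \cite[Theorem~6.2]{bolte2018first} for gradient-like descent sequences, the one underlying their Theorem~4.1, therefore gives finite length and convergence to a critical point $x^\ast$.
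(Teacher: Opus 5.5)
Your proposal is correct and follows essentially the same route as the paper. You verify (C1)--(C3) using Lemma~\ref{lemma:inequality_MD} at $\tau=1/L$ together with local strong convexity and local Lipschitz constants on a bounded set containing the iterates, build the same subgradient $w_{k+1}$ from the optimality condition, test the subproblem against the limit point for (C3), and then conclude i) via closedness of the limiting subdifferential and ii) via the abstract KL result of \cite{bolte2018first}. Your choice of a closed ball $K$, which keeps the segment between consecutive iterates inside the region of strong convexity, is a small precision over the paper's ``compact sets'' phrasing.
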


\begin{proof}~
\blue
\begin{enumerate}[leftmargin=*, itemsep=0.5em]
    \item We first show that the sequence $(x_k)_{k \in \mathbb{N}}$ is a gradient-like descent sequence by verifying that conditions C1, C2, and C3 of Definition~\ref{def:gradient_like_seq} are satisfied.
    \paragraph{(C1) Sufficient decrease property}
    From Proposition \ref{prop:cvMD_iter} and Assumption \ref{assumption:MD_nonLipschitz_2}, there exists $\sigma >0$ such that
    \begin{equation} \label{eq:strongconv_ineq}
    \tau\bigl(F(x^{k}) - F(x^{k+1})\bigr)
    \ge
    D_u(x^k,x^{k+1})
    \ge
    \frac{\sigma}{2}\|x^{k+1}-x^{k}\|^2,
    \end{equation}
    where the last inequality follows from the $\sigma$-strong convexity of $u$ on compact sets. Note that \eqref{eq:strongconv_ineq} also implies that $\|x^{k+1}-x^{k}\| \underset{k \rightarrow +\infty}{\to} 0$ because $D_u(x^k,x^{k+1})\underset{k \rightarrow +\infty}{\to} 0$ from Proposition \ref{prop:cvMD_iter}.
    This proves that condition C1 holds true.

    \paragraph{(C2) Subgradient lower bound for the iterates gap}
    Writing the optimality condition of the optimization problem \eqref{eq:MD_Bregman_prox} which defines $x^{k+1}$ yields:
    \begin{equation}
        0 \in \partial f(x^{k+1}) + \nabla g(x^k) + \frac{1}{\tau} \bigl(\nabla u(x^{k+1}) - \nabla u(x^k)\bigr).
    \end{equation}
    Moreover we have that $\partial F(x) = \partial f(x) + \nabla g(x)$ for all $x \in \dom u$. Let for all $k \in \mathbb{N}$
    \begin{equation}
        w^{k+1} := \nabla g(x^{k+1}) - \nabla g(x^{k}) + \frac{1}{\tau} \bigl(\nabla u(x^{k}) - \nabla u(x^{k+1})\bigr).
    \end{equation}
    Then $w^{k+1} \in  \nabla g(x^{k+1}) + \partial f(x^{k+1}) \subset \partial F(x^{k+1})$.
    Since $(x_k)_{k \in \mathbb{N}}$ is a bounded sequence and both $\nabla u$ and $\nabla g$ are Lipschitz continuous on bounded subsets of $\mathbb{R}^d$ (see Assumption \ref{assumption:MD_nonLipschitz_2}), there exists $M > 0$ such that
    \begin{align}
        \|w^{k+1}\|
        &\le
        \|\nabla g(x^{k+1}) - \nabla g(x^{k})\|
        +
        \frac{1}{\tau}\|\nabla u(x^{k}) - \nabla u(x^{k+1})\|\\
        &\le
        M\left(1+\frac{1}{\tau}\right)\|x^{k+1}-x^{k}\|.
    \end{align}
    This proves that condition C2 also holds true.
    
    \paragraph{(C3) Lower semicontinuity along limit points}
    Consider a subsequence $(x^{n_k})_{k \in \mathbb{N}}$ which converges to some point $x^\ast$. Note that there exists such a subsequence since the sequence $(x^k)_{k \in \mathbb{N}}$ is assumed to be bounded. Since $\Vert x^{k} - x^{k-1} \Vert \to 0$, the sequence $(x^{n_k-1})_{k \in \mathbb{N}}$ also converges to $x^\ast$.
    In addition, since $u$ is continuously differentiable on $\mathbb{R}^d$, we have
    \begin{equation} \label{eq:D_x_xstar}
    \lim_{k \to \infty} D_u(x^\ast, x^{n_k-1}) = 0.
    \end{equation}
    Now from $x^{(k+1)} = T_\tau(x^k)$ given by \eqref{eq:T_tau_x} it follows that:
    \begin{align}
    f(x^k) + \langle x^k-x^{k-1}, \nabla g(x^{k-1}) \rangle + \frac{1}{\tau} D_u(x^k, x^{k-1})
    &\leq
    f(x^\ast) + \langle x^\ast-x^{k-1}, \nabla g(x^{k-1}) \rangle + \frac{1}{\tau} D_u(x^\ast, x^{k-1}), \end{align}
    thus
    \begin{align}
    f(x^k) &\leq f(x^\ast)
    + \langle x^\ast - x^k, \nabla g(x^{k-1}) \rangle
    + \frac{1}{\lambda} D_u(x^\ast, x^{k-1})
    - \frac{1}{\lambda} D_u(x^k, x^{k-1})\\
    & \leq f(x^\ast)
    + \langle x^\ast - x^k, \nabla g(x^{k-1}) \rangle
    + \frac{1}{\lambda} D_u(x^\ast, x^{k-1}).
    \end{align}
    
    Substituting $k$ by $n_k$ and letting $k \to \infty$, we obtain from \eqref{eq:D_x_xstar} and lower semi-continuity of $f$
    \begin{equation}
    \limsup_{k \to \infty} f(x^{n_k}) \leq f(x^\ast).
    \end{equation}
    From the continuity of $g$, it follows that   \begin{equation}
    \limsup_{k \to \infty} F(x^{n_k}) \leq F(x^\ast).
    \end{equation}
    This proves condition C3, and thanks to Lemma~\cite[Lemma~4.2]{bolte2018first}, the first item of the proposition follows. 
    We thus obtain that $F$ is constant on the set of all cluster points of the sequence $(x^k)_{k \in \mathbb{N}}$ by \cite[Lemma~4.2]{bolte2018first}.
    \item The second part of the proof stay unchanged w.r.t. the proof of~\cite[Theorem~4.1]{bolte2018first}
\end{enumerate}
\end{proof}
}

\section{The EMML Algorithm} \label{sec:EMML_as_a_MD}

Following the Poisson model~\eqref{eq:poisson_model},  
the EMML algorithm \eqref{eq:EMML_update} computes the maximum-likelihood estimate of $x$ by minimizing the negative log-likelihood
$
x \mapsto \mathrm{KL}(y, Ax).
$

\subsection{EMML as EM algorithm}

To derive the EM algorithm, we introduce latent variables $ z_{ij} \in \mathbb{N} $ representing the number of photons emitted from pixel $ j $ and detected by detector $ i $ as
\begin{equation}
    z_{ij} \sim \mathrm{Poisson}(A_{ij} x_j), 
    \qquad
    y_i = \sum_{j=1}^{n} z_{ij},
\end{equation}
where the observed counts are obtained by marginalization. The joint complete-data likelihood can then be written as
\begin{equation} \label{eq:complete_loglike}
    p(z,y \mid x)
    = \frac{1}{\prod_{i,j} z_{ij}!}
       \exp\Big(
           \sum_{i,j \sego{:\, A_{ij}>0}} z_{ij} \log(A_{ij} x_j)
           - A_{ij} x_j
       \Big).
\end{equation}

\noindent
We now apply the EM algorithm.

\textbf{E-step:} We compute the expected complete-data negative log-likelihood:
\begin{align*}
    Q_{x^{(r)}}(x)
    &= -\mathbb{E}_{p(z \mid y, x^{(r)})} \big[ \log p(z, y \mid x) \big] \\
    &\propto -\sum_{i, j \sego{:\, A_{ij}>0}}\left(
        -A_{ij}x_j
        + y_i \frac{A_{ij} x_j^{(r)}}{(Ax^{(r)})_i}
        \log(A_{ij}x_j)
    \right).
\end{align*}
This function is the EM surrogate associated with the observed negative log-likelihood, which coincides with $\KL(y,Ax)$ up to a constant independent of $x$.

\textbf{M-step:} We minimize $Q_{x^{(r)}}$ with respect to $x$. Setting the gradient with respect to $x_j$ to zero yields
\begin{align*}
0 = \sum_{i: \sego{A_{ij}>0}} \left(
A_{ij}
- \frac{1}{x_j}
  \frac{y_i A_{ij} x_j^{(r)}}{(Ax^{(r)})_i}
\right).
\end{align*}
Solving for $x_j$ leads to the classical EMML iteration \eqref{eq:EMML_update}.

\subsection{EMML as MD}

EM applied to exponential families is known to admit a MD interpretation \cite{kunstner2021homeomorphic, aubin-frankowski_mirror_2022}.  
Since the Poisson distribution belongs to the exponential family \cite{rossignol_bregman_2022}, this framework applies to our setting.  
In particular, the joint complete-data log-likelihood \eqref{eq:complete_loglike} can be rewritten in the canonical exponential-family form as
\begin{equation}
    p(z,y \mid x)
    = \frac{1}{h(z)}
       \exp\left(
           \langle S(z,y), \theta \rangle
           - \A(\theta)
       \right).
\end{equation}
The corresponding components are:

\begin{itemize}
    \item natural parameter  
    $\theta_j = \log x_j$, $j=1,\dots,n$,
    
    \item sufficient statistic  
    $S(z,y)_j = \sum_{i}  z_{ij}$,
    
    \item log-partition function  
    $\A(\theta)
    = \sum_{i,j} A_{ij} e^{\theta_j}
    = \mathbf{1}^\top A e^{\theta}$,
    
    \item base measure  
        $h(z) = \exp\,(
            -\sum_{i,j \sego{:\, A_{ij}>0}} z_{ij} \log A_{ij}
            + \log(z_{ij}!)
        )$.
\end{itemize}

This formulation is key for interpreting EMML as a MD algorithm. Exploiting the mirror map and the reparametrization of \cite{kunstner2021homeomorphic}, we obtain the following proposition for EMML.

\begin{proposition}\label{prop:EMML_as_MD}
Define $g:\mathbb{R}^n \to \mathbb{R}$ by $g(\theta)=\KL(y, A e^{\theta})$ 
and assume that every row and every column of $A$ contains at least one strictly positive entry, and that $A^\top y >0$. Consider MD applied to $g$ with mirror map
\begin{equation}\label{eq:mirror_map_theta}
\A(\theta)=\sum_{i,j} A_{ij} e^{\theta_j}.
\end{equation}
Then the corresponding MD update (see relation ~\eqref{eq:MD_update})
 \begin{equation}\label{eq:MD_EMML}
\theta^{(r+1)} = (\nabla \A)^{-1}\bigl( \nabla \A(\theta^{(r)}) - \nabla g(\theta^{(r)}) \bigr)
\end{equation}
is equivalent to the classical EMML iteration, up to the reparametrization $x^{(r)} = \exp(\theta^{(r)})$.
\end{proposition}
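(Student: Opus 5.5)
The argument is a direct computation. We compute $\nabla \A$, its inverse, and $\nabla g$ in the $\theta$-variables, plug them into \eqref{eq:MD_EMML}, and read off the update in $x=e^{\theta}$.

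\textbf{Gradient of the mirror map.} Componentwise,
\begin{equation}
\nabla \A(\theta)_j = \sum_i A_{ij} e^{\theta_j} = (A^\top \mathbf{1})_j\, e^{\theta_j},
\qquad\text{so}\qquad
\nabla\A(\theta) = (A^\top\mathbf{1})\odot e^{\theta}.
\end{equation}
Every column of $A$ has a positive entry, so $A^\top\mathbf{1}>0$. Hence $\nabla\A$ is a bijection from $\R^n$ onto $\R^n_{>0}$, with inverse
\begin{equation}
(\nabla\A)^{-1}(w)=\log\bigl(w/(A^\top\mathbf{1})\bigr), \qquad w\in\R^n_{>0}.
\end{equation}
This also shows that $\A$ is strictly convex and Legendre on $\R^n$, so the MD step is well posed as soon as its argument is positive.

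\textbf{Gradient of the objective.} By the chain rule, with $x=e^\theta$,
\begin{equation}
\nabla g(\theta) = e^\theta\odot A^\top\Bigl(\mathbf{1}-\frac{y}{Ae^\theta}\Bigr).
\end{equation}
Every row of $A$ is nonzero, so $Ae^\theta>0$ and this expression is well defined.

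\textbf{Assembling the update.} Substituting both gradients into \eqref{eq:MD_EMML}, the terms $e^{\theta^{(r)}}\odot A^\top\mathbf{1}$ cancel, and the argument of $(\nabla\A)^{-1}$ becomes
\begin{equation}
e^{\theta^{(r)}}\odot A^\top\frac{y}{Ae^{\theta^{(r)}}}.
\end{equation}

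\textbf{Main obstacle.} The only delicate point is to check that this argument lies in the range $\R^n_{>0}$ of $\nabla\A$, so that $(\nabla\A)^{-1}$ can be applied. The entries of $A$ and $y$ are nonnegative, so
\begin{equation}
\Bigl(A^\top \frac{y}{Ae^{\theta}}\Bigr)_j=\sum_i \frac{A_{ij}y_i}{(Ae^\theta)_i}.
\end{equation}
This sum is zero only if $A_{ij}y_i=0$ for all $i$, which is exactly what $A^\top y>0$ rules out. So the argument is strictly positive.

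\textbf{Conclusion.} Applying $(\nabla\A)^{-1}$ and exponentiating gives
\begin{equation}
x^{(r+1)}=e^{\theta^{(r+1)}} = x^{(r)}\odot\frac{A^\top \frac{y}{Ax^{(r)}}}{A^\top\mathbf{1}},
\end{equation}
which is exactly \eqref{eq:EMML_update}. Conversely, any positive EMML iterate $x^{(r)}$ corresponds to $\theta^{(r)}=\log x^{(r)}$. The two iterations are therefore conjugate under $x=e^\theta$.
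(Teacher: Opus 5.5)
Your computation is correct, but it is not how the paper obtains the statement. The paper gives no line-by-line proof at this point. Instead, it writes the complete-data Poisson likelihood in canonical exponential-family form, with natural parameter $\theta=\log x$, sufficient statistic $S(z,y)_j=\sum_i z_{ij}$ and log-partition function $\A$, and notes in Section 3.1 that EM for this model is EMML. It then invokes the general result of \cite{kunstner2021homeomorphic}: EM for an exponential family is mirror descent with step size $1$ on the observed-data negative log-likelihood (here $g$ up to a constant), with the log-partition function as mirror map. The explicit formulas $\nabla\A(\theta)=e^\theta\odot A^\top\mathbf 1$ and $\nabla g(\theta)=e^\theta\odot A^\top(\mathbf 1-y/(Ae^\theta))$ appear only later, in the proof of Proposition~\ref{prop:cv_speed_EMML}. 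The same holds for the positivity of $e^\theta\odot A^\top(y/(Ae^\theta))$ under $A^\top y>0$, which is the case $\tau=1$ of $q_{\tau,\theta}$ there.

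Your direct verification is self-contained and shows exactly where each hypothesis enters. The column condition gives $A^\top\mathbf 1>0$ and hence invertibility of $\nabla\A$. The row condition gives $Ae^\theta>0$. The condition $A^\top y>0$ puts the argument in $\R^n_{>0}$, the range of $\nabla\A$. In the EM picture, this last point is the requirement that the E-step expected sufficient statistic lie in the interior of the mean-parameter space, which the citation-based argument leaves implicit. The paper's route, in exchange, explains why the log-partition function is the natural mirror map and why the step size is $1$, and it carries over directly to other exponential-family EM schemes.
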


The convergence of the EMML algorithm was established earlier in \cite{byrne1993iterative, shepp1982maximum}, where the decrease property of the iterates was analyzed using arguments based on the \KL function.
Convergence rates for EM algorithms associated with exponential family models were subsequently analyzed in \cite[Proposition~2]{kunstner2021homeomorphic} and \cite[Proposition~11]{aubin-frankowski_mirror_2022}.

\modif{
\begin{proposition} \label{prop:cv_speed_EMML}
Suppose that iterates $(\theta^{(k)})_k$ in the MD~\eqref{eq:MD_EMML} stay bounded. Then the sequence converges in the sense of Proposition \ref{prop:cvMD_sequence}. Moreover,
for any $N \in \mathbb{N}^*$ and any initial point $x^{(0)} \in \R^n_{>0}$,
there exists a constant $K$ such that the EMML iterates satisfy
\begin{equation}
\min_{1\le k\le N} \|x^{(k)} - x^{(k+1)}\|^2 \leq \frac{K}{N}\bigl(g(\theta^{(0)}) - \inf g\bigr).
\end{equation}
\end{proposition}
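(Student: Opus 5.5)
The plan is to obtain both claims from Propositions~\ref{prop:cvMD_iter} and~\ref{prop:cvMD_sequence}, applied to the reparametrized problem in $\theta$. We take $u=\A$, $g(\theta)=\KL(y,Ae^\theta)$, $f\equiv0$ and $L=1$, so that $\tau=1/L=1$ and the BPG step is exactly the MD update~\eqref{eq:MD_EMML}. By Proposition~\ref{prop:EMML_as_MD}, the resulting iterates are the EMML iterates through $x^{(k)}=e^{\theta^{(k)}}$.

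\textbf{Step 1: Assumption~\ref{assumption:MD_nonLipschitz_1}.}
\begin{itemize}[leftmargin=*]
\item $\A$ is Legendre on $\R^n$. It is finite and differentiable everywhere, and it is strictly convex because every column of $A$ has a positive entry, so each coefficient $c_j=(A^\top\mathbf 1)_j$ is positive.
\item $g$ is $C^1$ and bounded below by $0$.
\item For relative smoothness with $L=1$, write
\begin{equation}
\A(\theta)-g(\theta)=\sum_i y_i\log\bigl(\textstyle\sum_j A_{ij}e^{\theta_j}\bigr)-\sum_i y_i\log y_i+\sum_i y_i .
\end{equation}
Each term $\theta\mapsto\log\sum_j A_{ij}e^{\theta_j}$ is a log-sum-exp composed with an affine shift (for $A_{ij}>0$), so it is convex. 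Hence $\A-g$ is convex.
\item The operator $T_1$ is single-valued and well defined. It maps $\theta\mapsto \log\bigl(x\odot A^\top(y/Ax)/A^\top\mathbf 1\bigr)$ with $x=e^\theta$, and $A^\top y>0$ guarantees that the argument of the logarithm is positive.
\end{itemize}

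\textbf{Step 2: Assumption~\ref{assumption:MD_nonLipschitz_2}.} We have $\dom\A=\R^n$. The Hessian of $\A$ is $\nabla^2\A(\theta)=\mathrm{diag}(c_j e^{\theta_j})$. On a ball of radius $R$ it is bounded below by $\sigma=\min_j c_j e^{-R}>0$, which gives strong convexity on bounded sets. The gradients $\nabla\A$ and $\nabla g(\theta)=e^\theta\odot A^\top(\mathbf 1-y/Ae^\theta)$ are $C^1$, hence locally Lipschitz. This uses $Ae^\theta>0$, which holds because every row of $A$ is nonzero.

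\textbf{Step 3: the KL property.} This is the only remaining hypothesis of Proposition~\ref{prop:cvMD_sequence}(ii). The function $g$ is a finite sum of $\log$ and $\exp$ compositions with linear maps, so it is definable in the o-minimal structure $\mathbb R_{\exp}$ and therefore satisfies the KL property. With boundedness of the iterates assumed, Proposition~\ref{prop:cvMD_sequence} then yields the first claim.

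\textbf{Step 4: the rate.} Proposition~\ref{prop:cvMD_iter} with $\tau=1$ gives
\begin{equation}
\min_k D_\A(\theta^{(k)},\theta^{(k+1)})\le \frac1N\bigl(g(\theta^{(0)})-\inf g\bigr).
\end{equation}
To pass to $x$, let $R$ bound the iterates. Strong convexity then gives $D_\A\ge\frac{\sigma}{2}\|\theta^{(k)}-\theta^{(k+1)}\|^2$. Since $\exp$ is $e^R$-Lipschitz on $[-R,R]$, we get $\|x^{(k)}-x^{(k+1)}\|\le e^R\|\theta^{(k)}-\theta^{(k+1)}\|$. Therefore
\begin{equation}
\|x^{(k)}-x^{(k+1)}\|^2\le \frac{2e^{2R}}{\sigma}\,D_\A(\theta^{(k)},\theta^{(k+1)}),
\end{equation}
and we can take $K=2e^{2R}/\sigma=2e^{3R}/\min_j c_j$.

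\textbf{Main obstacle.} The delicate point is that strong convexity of $\A$ and the Lipschitz constants hold only locally. This is why boundedness of $(\theta^{(k)})$ is essential, and $K$ depends on the bound $R$ and hence on $x^{(0)}$. A secondary issue is that the index range $1\le k\le N$ in the statement differs slightly from the one in Proposition~\ref{prop:cvMD_iter}. Summing the descent inequality from $k=1$ to $N$ and using $g(\theta^{(1)})\le g(\theta^{(0)})$ handles this.
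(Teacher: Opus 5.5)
Your proposal is correct and follows essentially the same route as the paper: you verify Assumptions~\ref{assumption:MD_nonLipschitz_1} and~\ref{assumption:MD_nonLipschitz_2} for $u=\A$, $f\equiv 0$, $L=\tau=1$ (log-sum-exp convexity of $\A-g$, Hessian bounds on bounded sets), then convert the Bregman bound of Proposition~\ref{prop:cvMD_iter} into a bound in $x$ via strong convexity of $\A$ on bounded sets and the local Lipschitz continuity of $\exp$, which the paper phrases as $\log$ being bi-Lipschitz on $[a,b]$. Your KL argument via definability in $\mathbb{R}_{\exp}$, your index shift to $1\le k\le N$ using $g(\theta^{(1)})\le g(\theta^{(0)})$, and your explicit constant $K=2e^{2R}/\sigma$ tighten steps the paper leaves implicit; in particular, the paper's ``$K=K_1K_2\tau$'' should read $\tau/(K_1K_2)$.
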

}

\begin{proof}~
\blue
Define $f\equiv 0$, $g(\theta) = \KL(y, A e^{\theta})$. We first prove that Assumptions \ref{assumption:MD_nonLipschitz_1} and \ref{assumption:MD_nonLipschitz_2} are verified, and then apply Propositions \ref{prop:cvMD_iter} and \ref{prop:cvMD_sequence}.
\paragraph{Verification of Assumption~\ref{assumption:MD_nonLipschitz_1}}.

\begin{enumerate}
    \item Since every column of $A$ contains at least one strictly positive entry, if we set $a_j\coloneqq \sum_{i=1}^m A_{ij}$, then $a_j>0$ for all $j=1,\dots,n$, and
    \begin{equation}
        \A(\theta)=\sum_{j=1}^n a_j e^{\theta_j}.
    \end{equation}
    Thus $\A$ is proper, lower semicontinuous, and differentiable on $\inte(\dom  \A)=\dom  \A=\R^n$. Moreover, $\nabla^2  \A(\theta)=\operatorname{Diag}\{(a_1e^{\theta_1},\dots,a_ne^{\theta_n})\}\succ 0$ on any bounded subset of $\R^n$, so $ \A$ is strictly convex on $\theta\in\R^n_{>-R}$. Since $\dom  \A=\R^n$ has empty boundary, the boundary condition in the definition of essential smoothness is vacuous. Hence $ \A$ is essentially smooth and strictly convex, and therefore $ \A$ is of Legendre type.

    \item By the row assumption on $A$, $Ae^\theta\in\R^m_{++}$ for every $\theta\in\R^n$. Hence $g(\theta)=\KL(y,Ae^\theta)$ is finite for every $\theta\in\R^n$, so $\dom g=\R^n$. Since $\theta\mapsto Ae^\theta$ is smooth and the KL divergence is smooth in its second argument on $\R^m_{++}$, $g$ is differentiable on $\R^n=\inte(\dom  \A)$. In particular, $g$ is proper and lower semicontinuous, and $\dom  \A\subseteq \dom g$.
    
    We now show that $g$ is $1$-smooth relative to $ \A$, that is, that $ \A-g$ is convex on $\R^n$. Since
    \begin{equation}
\KL(y,z)=\sum_{i=1}^m\left(y_i\log\frac{y_i}{z_i}-y_i+z_i\right).
    \end{equation}
    and $ \A(\theta)=\sum_{i=1}^m(Ae^\theta)_i$, we get
    \begin{equation}
         \A(\theta)-g(\theta)
        =
        \sum_{i=1}^m y_i\log(Ae^\theta)_i
        -
        \sum_{i=1}^m(y_i\log y_i-y_i).
    \end{equation}
    The second term is constant. For each $i$, the function $\theta\mapsto \log(Ae^\theta)_i=\log(\sum_{j=1}^n A_{ij}e^{\theta_j})$ is a log-sum-exp function, hence is convex. Since $y_i\geq 0$ for every $i$, $ \A-g$ is a nonnegative weighted sum of convex functions, up to an additive constant. Therefore $ \A-g$ is convex, so $g$ is $1$-smooth relative to $ \A$.

    \item Since $f\equiv 0$, $f$ is proper, lower semicontinuous, and convex, with $\dom  \A \cap \dom f = \R^n \neq \emptyset$.

    \item As the KL divergence is nonnegative, $g(\theta)\geq 0$ for every $\theta\in\R^n$. Since $f\equiv 0$, we have $F(\theta)\geq 0$ on $\dom F$ and $F(\theta)=+\infty$ outside $\dom F$. Therefore
    \begin{equation}
        v(\mathcal P)=\inf\{F(\theta):\theta\in\overline{\dom}\, \A\}\geq 0>-\infty.
    \end{equation}

    \item In our setting, $\inte(\dom  \A)=\R^n$. We show that, for every 
$\theta\in\R^n$ and every $\tau\in(0,1)$ or ($\tau=1$ and $A^\top y \in \R_{++}^n$), the operator $T_\tau(\theta)$ is nonempty, single-valued, and belongs to $\R^n$.

Let
$
    a \coloneqq A^\top \mathbf 1_m.
$
Since every column of $A$ contains at least one strictly positive entry, we have $a\in\R^n_{++}$ and
\begin{equation}
     \A(z)=\langle a,e^z\rangle,
    \qquad 
    \nabla  \A(\theta)=e^\theta\odot a.
\end{equation}
where $ \A$ is strictly convex on every bounded subset of $\R^n$.
Moreover, since
$
    g(\theta)=\KL(y,Ae^\theta),
$
we have
\begin{equation}
    \nabla g(\theta)
    =
    e^\theta\odot A^\top\left(\mathbf 1_m-\frac{y}{Ae^\theta}\right),
\end{equation}
where the division is componentwise. Hence
\begin{equation}
    \nabla g(\theta)-\frac1\tau \nabla  \A(\theta)
    =
    -e^\theta\odot
    \left[
        \left(\frac1\tau-1\right)A^\top\mathbf 1_m
        +
        A^\top\left(\frac{y}{Ae^\theta}\right)
    \right].
\end{equation}
Set
\begin{equation}
    q_{\tau,\theta}
    \coloneqq
    e^\theta\odot
    \left[
        \left(\frac1\tau-1\right)A^\top\mathbf 1_m
        +
        A^\top\left(\frac{y}{Ae^\theta}\right)
    \right],
\end{equation}
then, for $\tau\in(0,1)$ or ($\tau=1$ and $A^\top y \in \R_{++}^n$), we have $q_{\tau,\theta}\in\R^n_{++}$. Now, up to an additive constant independent of $z$,
\begin{equation} \label{eq:proof_Ttau}
    \langle z,\nabla g(\theta)\rangle
    +
    \frac1\tau D_u(z,\theta)
    =
    \frac1\tau \langle a,e^z\rangle
    -
    \langle q_{\tau,\theta},z\rangle .
\end{equation}
The right-hand side is coercive on $\R^n$. Indeed, the exponential term 
$\langle a,e^z\rangle$ controls the directions where some components of $z$ go to $+\infty$, while the term 
$-\langle q_{\tau,\theta},z\rangle$ controls the directions where some components of $z$ go to $-\infty$, because $q_{\tau,\theta}\in\R^n_{++}$.

Therefore the function
\begin{equation}
    z\mapsto 
    \langle z-\theta,\nabla g(\theta)\rangle
    +
    \frac1\tau D_u(z,\theta)
\end{equation}
is proper, lower semicontinuous and coercive. The minimizer is unique because $z\mapsto D_u(z,\theta)$ is strictly convex by strong convexity of $ \A$ on every bounded subset of $\R^n$, and adding a linear term preserves strict convexity on $C_\theta$. Thus $T_\tau(\theta)$ is nonempty and single-valued.
Finally, since $\dom  \A=\R^n$, the minimizer automatically belongs to $\inte(\dom  \A)=\R^n$. Hence $T_\tau$ maps $\inte(\dom  \A)$ into $\inte(\dom  \A)$.
\end{enumerate}
Thus all points of Assumption~\ref{assumption:MD_nonLipschitz_1} are satisfied.

\paragraph{Verification of Assumption~\ref{assumption:MD_nonLipschitz_2}}.
\begin{enumerate}

\item Let $a\coloneqq A^\top\mathbf 1_m$. Since every column of $A$ contains at least one strictly positive entry, $a\in\R^n_{++}$ and
$
     \A(\theta)=\langle a,e^\theta\rangle.
$
Thus $\dom  \A=\R^n$ and
\begin{equation}
    \nabla^2  \A(\theta)
    =
    \operatorname{diag}(a\odot e^\theta).
\end{equation}
Let $B\subset\R^n$ be bounded. Then there exists $M>0$ such that $|\theta_j|\le M$ for all $\theta\in B$ and all $j$. Hence
\begin{equation}
    \forall \theta \in B, \quad
    \nabla^2  \A(\theta)
    \succeq
    \left(\min_{1\le j\le n} a_j e^{-M}\right) I
\end{equation}
Taking $\sigma=\min_{1\le j\le n} a_j e^{-M}>0$, we deduce that $ \A$ is strongly convex on every bounded subset of $\R^n$.

\item $\nabla  \A$ is Lipschitz continuous on every bounded subset of $\R^n$ because $\nabla^2  \A(\theta)=\operatorname{diag}(a\odot e^\theta)$ is bounded on bounded sets. Moreover, since every row of $A$ contains at least one strictly positive entry, $Ae^\theta\in\R^m_{++}$ for every $\theta\in\R^n$, 
and thus
\begin{equation}
    \nabla g(\theta)
    =
    e^\theta\odot A^\top\left(\mathbf 1_m-\frac{y}{Ae^\theta}\right).
\end{equation}
is well defined. 
Moreover, on every bounded subset of $\R^n$, the vector $e^\theta$ is bounded above and below away from zero, and $Ae^\theta$ is also bounded above and below away from zero componentwise. Hence the Hessian of $g$ is bounded on bounded subsets, which implies that $\nabla g$ is Lipschitz continuous on every bounded subset of $\R^n$.
\end{enumerate}
Thus all points of Assumption~\ref{assumption:MD_nonLipschitz_2} are satisfied.

\paragraph{Convergence of EMML from a MD point of view.}
We now state the convergence result for EMML. First, under the assumption that the iterates are bounded, they converge in the sense of Proposition~\ref{prop:cvMD_sequence}.
Moreover, Proposition \ref{prop:cvMD_iter} yields
\begin{equation}
\min_{1\le k\le n} D_\A(\log(x^{(k)}), \log(x^{(k+1)}))
\le
\frac{\tau}{n}\bigl(g(\theta^{(0)}) - \inf g\bigr).
\end{equation}
Using boundedness of iterates and the strong convexity of $\mathcal A$ on bounded sets
we deduce 
\begin{equation}
\frac{\tau}{n}\bigl(g(\theta^{(0)}) - \inf g\bigr)
\ge
K_1 \min_{1\le k\le n} \|\log(x^{(k)}) - \log(x^{(k+1)})\|^2.
\end{equation}
Since the iterates $(x^k)_{k\in\N}$ are bounded away from $0$ and $+\infty$, there exist constants $0<a\le b$ such that
\begin{equation}
x_i^{(k)},\, x_i^{(k+1)} \in [a,b]
\quad \text{for all } i,k.
\end{equation}
On $[a,b]$, the function $\log$ is Lipschitz and strongly monotone, hence bi-Lipschitz. In particular, there exist constants $K_2, K_2'>0$ such that for all $x,y \in [a,b]^n$,
\begin{equation}
K_2 \|x-y\|^2 \le \|\log x - \log y\|^2 \le K_2' \|x-y\|^2.
\end{equation}
Thus we deduce that
\begin{equation}
\frac{\tau}{n}\bigl(g(\theta^{(0)}) - \inf g\bigr) 
\ge 
K_1 K_2 \min_{1\le k\le n} \|x^{(k)} - x^{(k+1)} \|^2
\end{equation}
Taking $K = K_1 K_2 \tau$, we obtain the desired result.
\end{proof}

\section{Constrained EMML} \label{sec:Constrained_EMML}

Interpreting the EMML algorithm as a MD algorithm allows incorporating constraints while preserving convergence guaranties. Let 
$C \subset \R^m$ be a convex set such that $C \cap \R^n_{>0} \neq \emptyset$.

We consider the constrained optimization problem
\begin{equation}
(\mathcal{P}_C) \quad \min_{x \in \R_{\geq 0}^m} \KL(y, A x) 
\quad \text{subject to} \quad x \in C.
\end{equation}

As shown in Proposition~\ref{prop:EMML_as_MD}, EMML is equivalent to MD applied to the variable $\theta = \log(x)$. A natural way to enforce constraints is thus to apply projected MD in the $\theta$-variable. Define the constraint set in the dual variables by
\begin{equation}
C_\theta := \{ \theta \in \R^m : e^\theta \in C \}.
\end{equation}
We assume that $C_\theta$ is closed and convex, which holds for sets of the form
$C = \{x \in \mathbb{R}^n : r(x) \leq 0\}$,
where $r : \mathbb{R}^n \to \mathbb{R}$ is continuous and such that the mapping
$\theta \mapsto r(e^\theta)$ is convex. In particular, this condition is satisfied by
$r(x) = \sum_{i=1}^n x_i - 1$.
Denoting as $\iota_{C_\theta}$ the indicator function of $C_\theta$, we  target
\begin{equation} \label{eq:min_cEMML}
\min_\theta F(\theta) \coloneq \KL(y, A e^\theta) + \iota_{C_\theta}(\theta),
\end{equation}
where in \eqref{eq:min_cEMML}, we identify $g(\theta) = \mathrm{KL}(y, A e^\theta)$ and $f(\theta) = \iota_{C_\theta}(\theta)$.
The Bregman proximal gradient update \eqref{eq:MD_Bregman_prox}, with respect to the Legendre function $\A$, can then be rewritten for this specific choice of functions as:
\begin{equation}
\theta^{(r+1)} = \operatorname{prox}_{\iota_{C_\theta}}^{\A}
\Bigl( (\nabla \A)^{-1} \bigl( \nabla \A(\theta^{(r)}) - \tau \nabla \KL(y, A e^{\theta^{(r)}}) \bigr) \Bigr).
\end{equation}

Recall that the Bregman proximal operator of an indicator function reduces to a Bregman projection \cite{BREGMAN1967, censor_parallel_1998}:
\begin{equation} \label{eq:prox_iota}
\operatorname{prox}_{\iota_{C_\theta}}^{\A}(\tilde \theta) 
= \argmin_{\theta \in C_\theta} D_{\A}(\theta, \tilde  \theta),
\end{equation}
where $D_\A$ is the Bregman divergence associated with $\A$. In practice, for many constrained sets defined for the primary variable $x$ (e.g., linear constraints), computing the Bregman projection in the $\theta$-variable may be difficult. 
\begin{proposition}
Let $\theta^\star$ denote the Bregman projection \eqref{eq:prox_iota} of $\tilde \theta$ and define
$ x^\star = e^{\theta^\star} $, $ \tilde x = e^{\tilde \theta}$.
Let
\begin{equation}
w_j := \sum_i A_{i,j} > 0, 
\;\; 
\tilde{w}_j := w_j \tilde{x_j}, 
\; \; 
\phi(x) = - \sum_j \tilde{w}_j \log x_j.
\end{equation}
Then, solving~\eqref{eq:prox_iota} is equivalent to the Bregman projection
\begin{equation}~
x^\star = \operatorname{Proj}_{C}^{\phi}(\tilde x) \coloneq \argmin_{x \in C} D_\phi(x, \tilde x),
\end{equation}
where $D_\phi$ is the Bregman divergence associated with $\phi$.
\end{proposition}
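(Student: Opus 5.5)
The plan is to compare the two objectives directly via the substitution $x = e^\theta$, which is a bijection from $C_\theta$ onto $C\cap\R^n_{>0}$ by the definition of $C_\theta$. I will not aim to show that the objectives coincide. Instead, I will show that their first-order optimality conditions coincide, and then conclude by convexity and uniqueness.

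First I compute $D_\A$ explicitly. With $w=A^\top\mathbf 1$, we have $\A(\theta)=\langle w,e^\theta\rangle$, and
\begin{equation}
D_\A(\theta,\tilde\theta)=\sum_j w_j\bigl(e^{\theta_j}-e^{\tilde\theta_j}-e^{\tilde\theta_j}(\theta_j-\tilde\theta_j)\bigr).
\end{equation}
Substituting $x=e^\theta$ and $\tilde x=e^{\tilde\theta}$ gives
\begin{equation}
\sum_j w_j\bigl(x_j-\tilde x_j-\tilde x_j\log(x_j/\tilde x_j)\bigr),
\end{equation}
which is $\KL(w\odot\tilde x, w\odot x)$. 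Since $\phi(x)=-\sum_j\tilde w_j\log x_j$, the Burg-type divergence is
\begin{equation}
D_\phi(x,\tilde x)=\sum_j \tilde w_j\bigl(x_j/\tilde x_j-1-\log(x_j/\tilde x_j)\bigr)=\sum_j w_j\bigl(x_j-\tilde x_j-\tilde x_j\log(x_j/\tilde x_j)\bigr).
\end{equation}
Here I used $\tilde w_j/\tilde x_j=w_j$. So the two objectives agree pointwise under the substitution. I would double-check this algebra, because it makes the proof nearly immediate: minimizing $D_\A(\cdot,\tilde\theta)$ over $C_\theta$ is then the same as minimizing $D_\phi(\cdot,\tilde x)$ over $e^{C_\theta}=C\cap\R^n_{>0}$.

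The remaining step is existence and uniqueness, together with the restriction to the positive orthant. $D_\phi(\cdot,\tilde x)$ is strictly convex on $\R^n_{>0}$, and it blows up to $+\infty$ as any $x_j\to0^+$ because of the $-\log$ term. Hence any minimizer over $C$ lies in $C\cap\R^n_{>0}$, and since $C$ is convex it is unique. This requires treating $\dom\phi=\R^n_{>0}$, so that points of $C$ on the boundary are excluded automatically. On the $\theta$ side, $D_\A(\cdot,\tilde\theta)$ is strictly convex and coercive on $\R^n$, as shown in the verification of Assumption~\ref{assumption:MD_nonLipschitz_1}. Since $C_\theta$ is closed, convex and nonempty (because $C\cap\R^n_{>0}\neq\emptyset$), $\theta^\star$ exists and is unique.

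The bijection $\theta\mapsto e^\theta$ preserves objective values, so it maps the minimizer to the minimizer, and therefore $x^\star=e^{\theta^\star}=\operatorname{Proj}_C^\phi(\tilde x)$. I expect the main obstacle to be existence on the $x$ side when $C$ is not closed, or when the infimum would be approached only at the boundary. I handle it by transferring existence from the $\theta$ side, where closedness of $C_\theta$ is assumed, using the equality of objective values.
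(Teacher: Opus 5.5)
Your proof is correct and follows the paper's route. Like the paper, you compute $D_{\A}(\log x,\log\tilde x)$ directly and match it with $D_\phi(x,\tilde x)$, then transfer the minimizer through the bijection $\theta\mapsto e^\theta$ from $C_\theta$ onto $C\cap\R^n_{>0}$. The paper only claims agreement ``up to constants'', whereas your computation shows exact equality because $\tilde w_j/\tilde x_j=w_j$.

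Your added discussion of existence, uniqueness, and the exclusion of boundary points of $C$ via $\dom\phi=\R^n_{>0}$ fills in what the paper leaves implicit. Delete your opening sentence about comparing first-order optimality conditions, since the argument you actually give compares the objectives directly.
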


\begin{proof}
The problem \eqref{eq:prox_iota} can be rewritten as
\begin{equation}\label{eq:bregman_proj}
x^\star \in \argmin_{x \in \R_{>0}^n} D_\A(\log x, \log \tilde  x) \quad \text{s.t.} \quad x \in C.
\end{equation}
A direct computation shows that, up to a constant independent of $x$, the Bregman divergence $D_{\A}(\log x, \log \tilde x)$ writes
\begin{equation}\label{eq:rewrite_DA}
D_{\A}(\log x, \log \tilde  x) \propto \sum_{j} w_j x_j - \sum_{j} w_j \tilde x_j \log x_j,
\end{equation}
which coincides, up to constants, with the Bregman divergence \sego{$D_\phi(x, \tilde{x})$}. 
The Bregman projection in the $\theta$-variable is thus equivalent to the Bregman projection of $x$ onto $C$ with divergence $D_\phi$.
\end{proof}

The resulting constrained EMML algorithm is summarized in Algorithm~\ref{algo:C_EMML_new_with_proj}.
\begin{algorithm}[htbp]
\caption{Constrained EMML via Projected MD} \label{algo:C_EMML_new_with_proj}
\textbf{Input:} $A, y, M, y$ ; initial point $x^{(0)} \in \R_{>0}^m$; weights $w_j := \sum_i A_{i,j}$. 

\begin{algorithmic}[1]
\For{$r = 0, 1, 2, \dots$}
\State $\tilde{x}^{(r)} = x^{(r)} \odot \dfrac{A^\top (y / (A x^{(r)}))}{A^\top \mathbf{1}}$ \Comment{EMML update}
\State Define $\tilde{w} = w \odot \tilde{x}^{(r)}$ and $\phi(x) = -\tilde{w}^\top \log(x)$
\State $x^{(r+1)} = \operatorname{Proj}_{C}^{\phi}(\tilde{x}^{(r)})$ \Comment{Bregman projection}
\EndFor
\State \textbf{return} $x^{(r+1)}$
\end{algorithmic}
\end{algorithm}

\begin{remark}
Note that the mirror map $\phi$ used in the algorithm depends on the iteration index. More precisely, both the reference point $\tilde{w}$ and thus the mirror map $\phi$ vary with the iteration $r$, i.e.,
\[
\tilde{w} = \tilde{w}^{(r)}, \qquad \phi = \phi^{(r)}.
\]
However, to simplify the notation and improve readability, we omit the superscript $(r)$.
\end{remark}

\begin{proposition}\label{prop:cv_speed_EMML_c}
Assume that every row and every column of $A$ contains at least one strictly positive entry, and that $A^\top y >0$. further assume that iterates of Algorithm \ref{algo:C_EMML_new_with_proj} stay bounded. Then Algorithm~\ref{algo:C_EMML_new_with_proj} converges in the sense of Proposition \ref{prop:cvMD_sequence}.
Moreover, similarly to Proposition \ref{prop:cv_speed_EMML}, for any $N \in \mathbb{N}^*$, any initial point $x^0 \in \R^n_{>0}$, 
there exists a constant $K$ such that the iterates of Algorithm~\ref{algo:C_EMML_new_with_proj} satisfy
\begin{equation}
\min_{\substack{1 \le k \le N \\ x \in C}} \|x^{(k)} - x^{(k+1)}\|^2 \leq \frac{K}{N}\bigl(g(\theta^{(0)}) - \inf g\bigr).
\end{equation}
\end{proposition}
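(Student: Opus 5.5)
Our plan is to view Algorithm~\ref{algo:C_EMML_new_with_proj} as the Bregman proximal gradient iteration \eqref{eq:MD_Bregman_prox} in the variable $\theta=\log x$, with mirror map $\A$, $g(\theta)=\KL(y,Ae^\theta)$, $f=\iota_{C_\theta}$ and stepsize $\tau=1=1/L$. We will then check Assumptions~\ref{assumption:MD_nonLipschitz_1} and~\ref{assumption:MD_nonLipschitz_2} and invoke Propositions~\ref{prop:cvMD_iter} and~\ref{prop:cvMD_sequence}.

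\textbf{Step 1: identification of the iteration.} For $f=\iota_{C_\theta}$, the minimization in \eqref{eq:T_tau_x} is the Bregman projection, with respect to $D_\A$, of the unconstrained MD point $(\nabla\A)^{-1}(\nabla\A(\theta)-\nabla g(\theta))$ onto $C_\theta$. This holds because the linearized objective equals, up to constants, $D_\A(z,\tilde\theta)$. By Proposition~\ref{prop:EMML_as_MD}, this MD point is $\log\tilde x^{(r)}$, where $\tilde x^{(r)}$ is the EMML update. By the preceding proposition, projecting onto $C_\theta$ with respect to $D_\A$ amounts to computing $\operatorname{Proj}^\phi_C(\tilde x^{(r)})$. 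Hence $\theta^{(r)}=\log x^{(r)}$ are exactly the BPG iterates.

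\textbf{Step 2: assumptions.} Points 1, 2 and 4 of Assumption~\ref{assumption:MD_nonLipschitz_1}, and all of Assumption~\ref{assumption:MD_nonLipschitz_2}, involve only $\A$ and $g$. They were therefore verified in the proof of Proposition~\ref{prop:cv_speed_EMML}. Point 4 still holds since $F\ge g\ge0$. Point 3 follows because $C_\theta$ is closed, convex and nonempty (as $C\cap\R^n_{>0}\neq\emptyset$), so $\iota_{C_\theta}$ is proper, lsc and convex, and $\dom\A=\R^n$. Point 5 is the main obstacle, and we handle it as follows. The objective in \eqref{eq:proof_Ttau} is coercive and strictly convex on $\R^n$ when $A^\top y>0$ and $\tau=1$, as shown earlier. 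Adding $\iota_{C_\theta}$ keeps it lsc and coercive, and the constraint set is nonempty. Hence a unique minimizer exists, and it lies in $\R^n=\inte\dom\A$.

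\textbf{Step 3: conclusion.} Under the boundedness assumption, Proposition~\ref{prop:cvMD_sequence} gives subsequential convergence to critical points of $F$. It also gives global convergence whenever $F$ has the KL property; we will note this holds when $C_\theta$ is semialgebraic or definable, since $g$ is definable in the log-exp structure. For the rate, Proposition~\ref{prop:cvMD_iter} yields $\min_k D_\A(\theta^{(k)},\theta^{(k+1)})\le \frac{1}{N}(F(\theta^{(0)})-\inf F)$. We then repeat the end of the proof of Proposition~\ref{prop:cv_speed_EMML}: strong convexity of $\A$ on the bounded set containing the iterates converts $D_\A$ into $\|\theta^{(k)}-\theta^{(k+1)}\|^2$, and the bi-Lipschitz property of $\log$ on $[a,b]^n$ converts this into $\|x^{(k)}-x^{(k+1)}\|^2$.

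The stated bound uses $g(\theta^{(0)})-\inf g$ rather than the $F$-values, so we adapt the constants as follows. Take $x^{(0)}\in C$, so that $F(\theta^{(0)})=g(\theta^{(0)})$, and write $F(\theta^{(0)})-\inf F = g(\theta^{(0)})-\inf g - (\inf F - \inf g)$. Since $\inf F\ge\inf g$, the last term is nonpositive to subtract, so $F(\theta^{(0)})-\inf F\le g(\theta^{(0)})-\inf g$. The minimum over $x\in C$ in the statement is then harmless, since all iterates $x^{(k)}$, $k\ge1$, lie in $C$.
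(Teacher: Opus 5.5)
Your proposal is correct and takes essentially the same route as the paper. You verify Assumptions~\ref{assumption:MD_nonLipschitz_1} and~\ref{assumption:MD_nonLipschitz_2} for $g(\theta)=\KL(y,Ae^\theta)$, $f=\iota_{C_\theta}$, mirror map $\A$ and $\tau=1$, with point 5 coming from coercivity, strict convexity and the closed nonempty $C_\theta$; you then apply Propositions~\ref{prop:cvMD_iter} and~\ref{prop:cvMD_sequence} and reuse the strong-convexity and bi-Lipschitz conversion from Proposition~\ref{prop:cv_speed_EMML}. Your explicit passage from $F(\theta^{(0)})-\inf F$ to $g(\theta^{(0)})-\inf g$ is more careful than the paper's ``identical'' remark, but it only covers $x^{(0)}\in C$, not the stated ``any $x^0\in\R^n_{>0}$''; the paper's argument has the same restriction, since the descent lemma needs $\theta^{(0)}\in\dom f$.
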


\begin{proof}
\blue
Points 1 and 2 of Assumption~\ref{assumption:MD_nonLipschitz_1} are established exactly as in the proof of Proposition~\ref{prop:cv_speed_EMML}.
Points 3 and 4 are immediate from the choice
\begin{equation}
f(\theta)=\iota_{C_\theta}(\theta),
\end{equation}
since $C_\theta$ is assumed to be nonempty and convex. The beginning of the verification of Point 5 follows the same arguments as in the proof of Proposition~\ref{prop:cv_speed_EMML}, but we instead consider the function
\begin{equation}
z \mapsto
\iota_{C_\theta}(z)
+
\langle z-\theta,\nabla g(\theta)\rangle
+
\frac{1}{\tau}D_\phi(z,\theta).
\end{equation}
Since $C_\theta$ is nonempty and closed, this function is proper, lower semicontinuous, and coercive. Therefore, it admits at least one minimizer.
Moreover, the minimizer is unique. Indeed, the mapping $z\mapsto D_\phi(z,\theta)$ is strictly convex, and adding a linear term together with the convex indicator function $\iota_{C_\theta}$ preserves strict convexity on $C_\theta$. Consequently, $T_\tau(\theta)$ is nonempty and single-valued.
Finally, since $\dom \phi=\mathbb{R}^n$, every minimizer necessarily belongs to
$
\operatorname{int}(\dom \phi)=\mathbb{R}^n.
$
Hence, $T_\tau$ maps $\operatorname{int}(\dom \phi)$ into itself and Assumption~\ref{assumption:MD_nonLipschitz_2} is verified exactly as in the proof of Proposition~\ref{prop:cv_speed_EMML}.

We may therefore apply Propositions~\ref{prop:cvMD_sequence} and~\ref{prop:cvMD_iter} to obtain the claimed convergence result. The remainder of the proof is identical to that of Proposition~\ref{prop:cv_speed_EMML}, as it does not depend on the particular choice of~$f$.
\end{proof}

\section{Numerical Experiments} \label{sec:numerical_results}

We consider hyperspectral unmixing \cite{bioucas2012hyperspectral} to compare the classical EMML algorithm with iterations given by \eqref{eq:EMML_update} and its constrained version in Algorithm \ref{algo:C_EMML_new_with_proj}. 
Hyperspectral images record hundreds of contiguous spectral bands for each pixel of a scene, and due to the limited spatial resolution of the sensors, each pixel typically contains a mixture of spectral responses from several materials. 
Hyperspectral unmixing aims at decomposing each observed pixel spectrum into a set of pure spectral signatures, called endmembers, and estimating their associated abundance fractions \cite{zhang2016advancement}. 
The abundance vector $x = (x^q)^\top$, $q \in \{\text{Soil}, \text{Tree}, \text{Water}\}$, represents the spatial abundance maps. Estimating $x$ from $y$ and $A$ constitutes the hyperspectral unmixing problem \cite{chan2018two}. We compare
\begin{align*}
&\text{Classical EMML:}
&&\min_{x \ge 0} \; \KL(y, Ax), \\[0.5ex]
&\text{Constrained EMML:}
&&\min_{x \ge 0} \; \KL(y, Ax)
\quad \text{s.t.} \quad \sum_j x_j \leq 1 , 
\end{align*}
where the simplex constraint empirically enforces that the abundances for each pixel sum to one \cite{iordache2012total, chouzenoux_fast_2014, chouzenoux_proximal_2020}. 
Reconstruction quality is assessed using the Peak Signal-to-Noise Ratio (PSNR).
Algorithms are run until $\|x^{(k+1)} - x^{(k)}\|_2^2 / \|x^{(k)}\|_2^2 < 10^{-5}$ or a maximum of $N = 1000$ iterations is reached.

We first empirically assess the convergence speed of (constrained) EMML and compare it with the theoretical bound for EMML given in \cite{kunstner2021homeomorphic}. 
The results are reported in Figure~\ref{fig:convergence_comparison}. The constrained algorithm converges faster than the unconstrained one. Consistent with the observations in \cite{kunstner2021homeomorphic}, both EMML variants exhibit convergence rates that are significantly faster than the sublinear rate predicted by the theoretical bound.

We then assess the quality of the reconstructions.
For all noise levels, the reconstructed abundance maps produced by both algorithms are visually similar, as shown in Figure~\ref{fig:results_abundance}.
Nevertheless, the reconstructions obtained with the unconstrained EMML appear noticeably noisier than those produced by the constrained EMML.
This qualitative observation is supported by a quantitative evaluation: the constrained EMML consistently achieves more accurate reconstructions in fewer iterations, yielding higher PSNR values across all noise levels, as reported in Figure~\ref{fig:PSNR_abundance}.

\begin{figure}[htbp]
\centering

\begin{minipage}[t]{0.49\linewidth}
    \centering
    \includegraphics[
        width=\linewidth,
        trim=0 0.3cm 0 0.2cm,
        clip
    ]{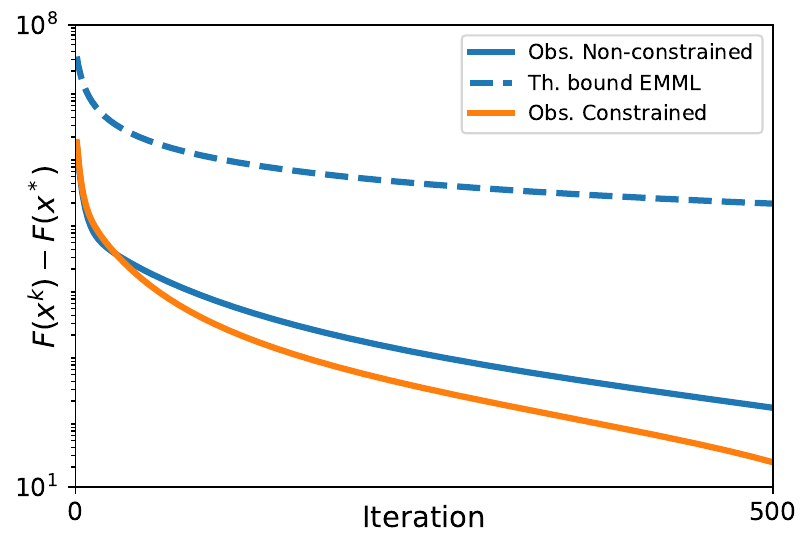}
    \captionof{figure}{Comparison between the observed convergence behavior and the corresponding sublinear theoretical bound for EMML with $\sigma = 40$, where $F(x^\star)$ denotes the minimum value of $F$. In both cases, the empirical convergence is substantially faster than the predicted sublinear rate.}
    \label{fig:convergence_comparison}
\end{minipage}
\hfill
\begin{minipage}[t]{0.49\linewidth}
    \centering
    \includegraphics[
        width=\linewidth,
        trim=0 0.3cm 0 0.2cm,
        clip
    ]{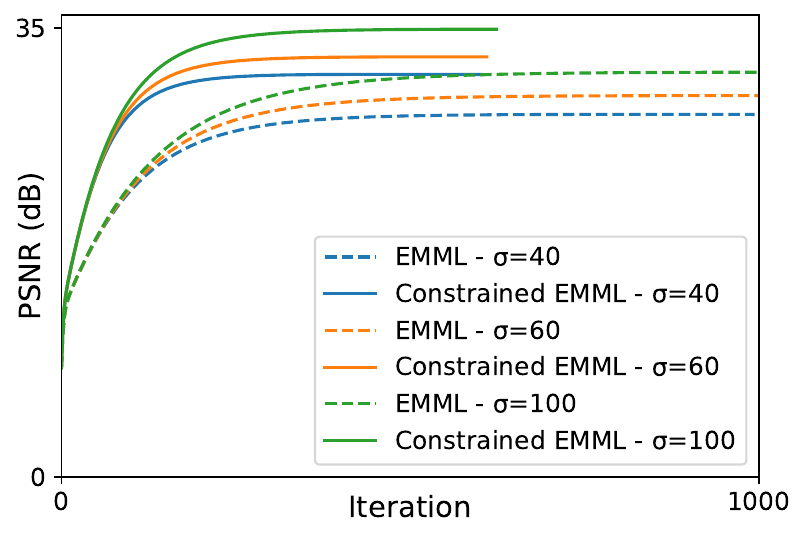}
    \captionof{figure}{PSNR of the reconstructed abundance maps obtained with EMML and constrained EMML at different noise levels. The constrained algorithm consistently achieves higher PSNR values and outperforms EMML even at higher noise levels, while converging in nearly half as many iterations.}
    \label{fig:PSNR_abundance}
\end{minipage}

\end{figure}

\begin{figure}[htbp]
\centering
\begin{tabular}{@{}c@{\hskip 0mm}c@{\hskip 0mm}c@{}}
\begin{tikzpicture}[spy using outlines={rectangle, orange, magnification=2, size=2cm}] 
\node (img) {\includegraphics[width=0.32\linewidth, trim=0 0 0 0.65cm, clip]{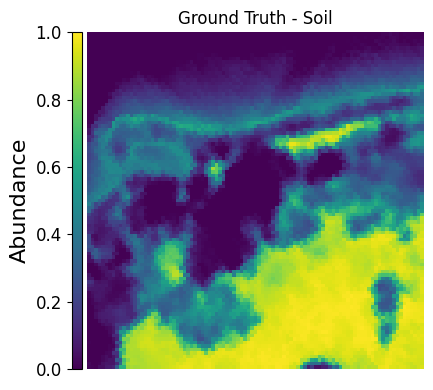}};
\spy on ($(img.center)+(-1,0.8)$) 
    in node [anchor=south east] at ($(img.south east)+(-0.05,0.05)$);
\end{tikzpicture}
&
\begin{tikzpicture}[spy using outlines={rectangle, orange, magnification=2, size=2cm}] 
\node (img) {\includegraphics[width=0.32\linewidth, trim=0 0 0 0.65cm, clip]{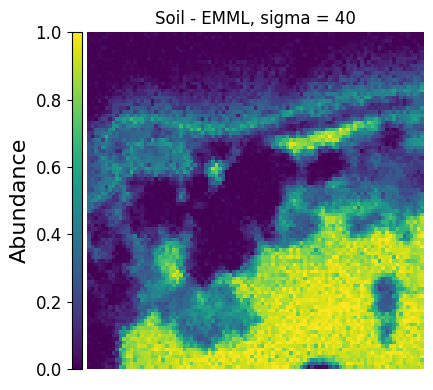}};
\spy on ($(img.center)+(-1,0.8)$) 
    in node [anchor=south east] at ($(img.south east)+(-0.05,0.05)$);
\end{tikzpicture}
&
\begin{tikzpicture}[spy using outlines={rectangle, orange, magnification=2, size=2cm}] 
\node (img) {\includegraphics[width=0.32\linewidth, trim=0 0 0 0.65cm, clip]{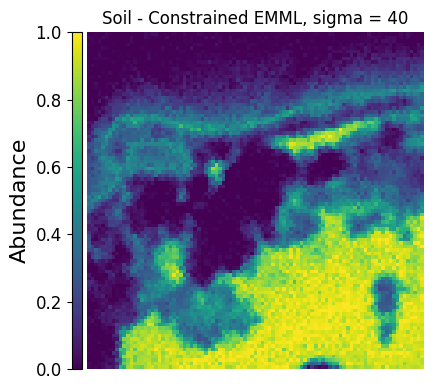}};
\spy on ($(img.center)+(-1,0.8)$) 
    in node [anchor=south east] at ($(img.south east)$);
\end{tikzpicture}

\\
(a) Ground Truth
&
(b) EMML~\eqref{eq:EMML_update}
&
(c) Constrained 
\\
&&EMML~\eqref{algo:C_EMML_new_with_proj}

\end{tabular}

\caption{Reconstructed abundance map for Soil with Poisson noise $\sigma = 40$. The reconstructions obtained with the EMML algorithm are visibly noisier than those obtained with the Constrained EMML algorithm.}
\label{fig:results_abundance}
\end{figure}

\section{Conclusion and Perspectives} \label{sec:Conclusion}

We revisited EMML as a MD method on a reparametrized objective, which allowed the introduction of convex constraints while preserving convergence guarantees. Numerical experiments show that these constraints accelerate convergence and improve reconstruction quality by acting as a regularization.

The method relies on Bregman projections, which can be challenging beyond simplex constraints \cite{martin2013iterative, constrainedEMML}. Extending it to more general convex or linear constraints would further broaden its applicability in inverse problems.

\section*{Acknowledgments}

AC acknowledges funding from the Berlin Mathematical School and the French Research Agency through the Holibrain project (ANR-23-CE45-0020). AC was also supported by the France 2030 BPI project Cone Beam AI. SM work was funded by the Deutsche Forschungsgemeinschaft (DFG, German Research Foundation) under Germany's Excellence Strategy – The Berlin Mathematics
Research Center MATH+ (EXC-2046/1, project ID: 390685689).

Many thanks to Chun-Neng Chu for pointing out that $f(\theta) = \KL(y,Ae^\theta)$ is in general not convex.

{\fontsize{10}{10}\selectfont
\bibliography{references}
}
\end{document}